\title{An inverse result of approximation by\\ sampling Kantorovich series}
\author{{\bf Danilo Costarelli} \hskip0.2cm and \hskip0.2cm {\bf Gianluca Vinti}     \\    \\
   Department of Mathematics and Computer Science \\
            University of Perugia\\
        1, Via Vanvitelli, 06123 Perugia, Italy    \\  \\
  {\small {\tt danilo.costarelli@unipg.it} \hskip0.2cm and \hskip0.2cm {\tt gianluca.vinti@unipg.it}} }
\date{}
\newcommand{\mau}{\geq}
\newcommand{\miu}{\leq}
\newcommand{\ep}{\varepsilon}
\newcommand{\N}{\mathbb{N}}
\newcommand{\R}{\mathbb{R}}
\newcommand{\Z}{\mathbb{Z}}
\newcommand{\be}{\begin{equation}}
\newcommand{\ee}{\end{equation}}
\newcommand{\F}{F_{\Phi}}
\newtheorem{definition}{Definition}[section]
\newtheorem{remark}[definition]{Remark}
\newtheorem{theorem}[definition]{Theorem}
\newtheorem{lemma}[definition]{Lemma}
\begin{document}

\maketitle  

\begin{abstract}
In the present paper, an inverse result of approximation, i.e., a saturation theorem for the sampling Kantorovich operators is derived, in the case of uniform approximation for uniformly continuous and bounded functions on the whole real line. In particular, here we prove that the best possible order of approximation that can be achieved by the above sampling series is the order one, otherwise the function being approximated turns to be a constant. The above result is proved by exploiting a suitable representation formula which relates the sampling Kantorovich series with the well-known generalized sampling operators introduced by P.L. Butzer. At the end, some other applications of such representation formula are presented, together with a discussion concerning the kernels of the above operators for which such an inverse result occurs.
\vskip0.3cm
\noindent
  {\footnotesize AMS 2010 Mathematics Subject Classification: 41A25, 41A05, 41A30, 47A58}
\vskip0.1cm
\noindent
  {\footnotesize Key words and phrases: inverse results; sampling Kantorovich series; order of approximation; central B-splines; generalized sampling operators; saturation theorem} 
\end{abstract}

\section{Introduction}

The theory of sampling-type operators has been largely studied since 1980s, when, in order to provide an approximate version of the classical Whittaker-Kotel'nikov-Shannon sampling theorem (see e.g. \cite{HI1,KITA1}), P.L. Butzer introduced the generalized sampling operators $G_w$ (see (\ref{GEN}) of Section \ref{sec2}), and studied their main properties, see e.g., \cite{BUST1,RIST1}. The operators $G_w$ allows to reconstruct (in some sense) a given continuous signal $f$ by a sequence of its sample values, which are of the form $f(k/w)$, $k \in \Z$, $w>0$.  Subsequently, such operators have been widely studied by many authors, see e.g., \cite{BABUSTVI3,DOLU1,TA1,KITA2,TA2,COGA1,COGA2,COGA3}.

In 2007, an $L^1$-version of the above operators have been introduced, with the definition of the sampling Kantorovich series $S_w$ (see (\ref{KANTO}) of Section \ref{sec2}; \cite{BABUSTVI2}), obtained by replacing in $G_w$ the sample values by the mean values $w\int_{k/w}^{(k+1)/w}f(u)\, du$, for any locally integrable signal $f$. The main advantage that can be achieved by the operators $S_w$ with respect to $G_w$, is that also not necessarily continuous signal can be approximated. Multivariate extensions of the above theory have been given in \cite{COVI1,COVI2}. In the latter case, applications to digital image for earthquake engineering have been studied in \cite{CLCOMIVI1,CLCOMIVI2,ING1}. Both the operators $G_w$ and $S_w$ are based upon suitable kernel functions satisfying certain assumptions. 

  Recently, the sampling Kantorovich operators have been studied with respect to various aspects, e.g., the convergence in suitable spaces of function (see e.g., \cite{VIZA1}), the order of approximation (see e.g., \cite{COVI3,COVI4,ORTA1}), their behavior at the discontinuity points of a given signal (see \cite{COMIVI1}), and so on. However, the following inverse problem is still open:

  if there exist a positive non-increasing function $\varphi(w)$, $w \in \R^+$, with $\lim_{w \to+\infty} \varphi(w)=0$, and a class of functions $\mathcal{K} \subseteq C(\R)$ (the space of uniformly continuous and bounded functions) such that:
$$
\mbox{(I)} \hskip0.4cm \| S_wf - f\|_{\infty}\, =\, o\left(\varphi(w)\right), \hskip0.2cm as\ w \to +\infty, \hskip0.2cm \mbox{implies}\, f=constant.
$$

  The main result showed in the present paper consists in proving (I) with $\mathcal{K} = C(\R)$, and $\varphi(w)=1/w$, i.e., we prove that the best possible order of approximation that can be achieved by the sampling Kantorovich operators is ``one''. The main steps required in order to prove the above result, are the following. Firstly we prove a representation formula for the sampling Kantorovich series in terms of the generalized sampling operators of $f$ and its derivatives until order $r$, provided that they exist, and are all uniformly continuous and bounded, namely $f$ belongs to the class $C^{(r)}(\R)$, $r \in \N^+$. Subsequently, we obtain a saturation result for the subspace $C^{(2)}(\R)$. Finally, we consider functions in $C(\R)$, and by the regularization provided by the convolution with suitable test functions, we become able to prove a version of the desired result (I) by exploiting the inverse results for $C^{(2)}$-functions (see Section \ref{sec3}). 
  
  The solution of the problem (I) can open the way to obtain a characterization of the saturation (Favard) classes of the approximation process defined by the sampling Kantorovich operators.

Note that, the inverse result just discussed, is quite different with respect to what happens in case of operators $G_w$ where, in order to obtain similar result, we need to require that $f \in C^{(r)}(\R)$, $r \in \N^+$, and therefore our problem cannot avail of the result for $G_w$.

  In conclusion of Section \ref{sec3}, we prove a further consequence of the above representation formula, by showing that under suitable assumptions on the kernels, the sampling Kantorovich operators maps algebraic polynomials into other polynomials with the same degree. Examples of kernels for which the above results hold are provided in Section \ref{sec4}.


\section{Preliminaries} \label{sec2}

We firstly introduce some notations. In what follows, for any arbitrary finite or infinite interval $I \subseteq \R$, we denote by $C(I)$ the space of all uniformly continuous and bounded functions $f:I \to \R$, endowed with the supremum norm $\|f\|_{\infty}:=\sup_{x \in I}|f(x)|$. Further, we denote by $C^{(r)}(I)$, $r \in \N^+$ the subspace of $C(I)$ for which the derivatives $f^{(s)}$ exist, for every $s \miu r$, $s \in \N^+$, and each $f^{(s)} \in C(I)$. Moreover, we define by $C_{c}(I)$ the subspace of $C(I)$ of functions having compact support, and similarly we can define $C^{(r)}_{c}(I)$, $r \in \N^+$. Finally, by $C^{\infty}_{c}(I)$ we denote the space of test functions, i.e., the space of functions with compact support which have continuous derivatives of any order, each one belonging to $C_{c}(I)$.

For any $f:\R \to \R$, we can define the discrete moment of order $\beta \in \N$, at point $u \in \R$, by:
\be
m_{\beta}(f,\, u)\ :=\ \sum_{k \in \Z} f(u-k)\, (u-k)^{\beta}, 
\ee
and the discrete absolute moment of order $\beta \mau 0$, by:
\be
M_{\beta}(f)\ :=\ \sup_{u \in \R}\, \sum_{k \in \Z} |f(u-k)|\, \cdot |u-k|^{\beta}.
\ee
Clearly, if the function $f$ belongs to $C_{c}(\R)$, it turns out that both $m_{\beta}(f,\, u)$, and $M_{\beta}(f)$ are finite, respectively for every $\beta \in \N$, with $u \in \R$, and $\beta \mau 0$.

  Now, we are able to recall the definition of the sampling Kantorovich operators, introduced in \cite{BABUSTVI2}:
\be \label{KANTO}
(S_w f)(x)\ :=\ \sum_{k \in \Z} \chi\left( wx-k\right)\, \left[ w\, \int_{k/w}^{(k+1)/w}f(u)\, du \right], \hskip1cm x \in \R,
\ee
where $f:\R \to \R$ is a locally integrable function, such that the above series is convergent for every $x \in \R$, and $\chi: \R \to \R$ is a kernel, i.e., a function which satisfies the following assumptions:
\begin{itemize}
\item[$(\chi 1)$] $\chi$ belongs to $L^1(\R)$ and it is locally bounded at the origin;
\item[$(\chi 2)$] the series $\sum_{k \in \Z} \chi(u-k)=1$, for every $u \in \R$;
\item[$(\chi 3)$] there exists $\beta>0$ for which $M_{\beta}(\chi)$ is finite.
\end{itemize}
Note that, in general it is possible to prove that, $(\chi 3)$ implies that $M_{\nu}(\chi)$ is finite, for every $0\miu \nu \miu \beta$, see e.g., \cite{BABUSTVI2,COVI3,COVI4}.

For instance, if we assume that $f \in L^{\infty}(\R)$, it turns out that $S_w f \in L^{\infty}(\R)$, i.e., $S_w$ maps $L^{\infty}(\R)$ to $L^{\infty}(\R)$, see \cite{BABUSTVI2}.

 Moreover, under the assumptions $(\chi i)$, $i=1, 2, 3$, the family of the sampling Kantorovich series $S_w f$ converges to $f$ pointwise at $x \in \R$, as $w \to +\infty$, provided that $f$ is bounded and continuous at $x$; the convergence is uniform on $\R$, if $f$ belongs to $C(\R)$, see \cite{BABUSTVI2} again.

  We recall that, the sampling Kantorovich operators have been introduced in order to provide an $L^1$-version of the classical generalized sampling operators, which are defined by:
\be \label{GEN}
(G_wf)(x)\ :=\ \sum_{k \in \Z} \chi\left( wx-k\right)\,  f\left( \frac{k}{w} \right), \hskip1cm x \in \R,
\ee 
with $w>0$, and where $\chi$ is a kernel satisfying assumptions $(\chi i)$, $i=1,2,3$. Both the operators $S_w$ and $G_w$ are instances of ``quasi-interpolation'' operators, see e.g., \cite{GAWU1,LIWA1,SP1,WAYUZH1}.

Pointwise and uniform convergence results analogous to those proved for the sampling Kantorovich series can be proved also for $(G_w f)_{w>0}$, see e.g., \cite{BUST1}. Now, we recall also the following high order convergence result, which can be useful in the present paper. 
\begin{theorem}[\cite{BUST1}] \label{samp-gen}
Let $\chi$ be a kernel, which satisfies the following condition:
\be \label{null_moments}
m_{j}\left(\chi,\, u\right)\ :=\ \left\{
\begin{array}{l}
0, \hskip1cm j=1, 2, ..., r-1, \\
1, \hskip1cm j=r,
\end{array}
\right.
\ee
for every $u \in \R$, and some $r \in \N^+$.

\noindent Then, for any $f \in C^{(r)}(\R)$ it holds:
$$
\| G_w f - f\|_{\infty}\ \miu\ \|f^{(r)}\|_{\infty}\, \frac{M_r(\chi)}{r!}\,  w^{-r},
$$
for every $w>0$.
\vskip0.2cm
\noindent Moreover, the following property occurs:
$$
\left(G_w p_{r-1}\right)(x)\ =\ p_{r-1}(x),  \hskip1cm x \in \R,
$$
for every $w>0$, where $p_{r-1}(x)$ denotes any algebraic polynomial of degree $r-1$.  
\end{theorem}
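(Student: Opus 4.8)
The plan is to reduce everything to a Taylor expansion of $f$ at the evaluation point together with the vanishing of the low-order discrete moments $m_j(\chi,\cdot)$. First I would fix $x\in\R$ and $w>0$ and use $(\chi 2)$ --- equivalently, $m_0(\chi,wx)=1$ --- to write
\[
(G_wf)(x)-f(x)\ =\ \sum_{k\in\Z}\chi(wx-k)\Big[f\big(\tfrac{k}{w}\big)-f(x)\Big].
\]
Since $f\in C^{(r)}(\R)$, Taylor's formula with Lagrange remainder supplies, for each $k$, a point $\xi_k$ between $x$ and $k/w$ such that $f(k/w)-f(x)=\sum_{j=1}^{r-1}\tfrac{f^{(j)}(x)}{j!}(\tfrac{k}{w}-x)^{j}+\tfrac{f^{(r)}(\xi_k)}{r!}(\tfrac{k}{w}-x)^{r}$. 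Substituting this, the $j$-th polynomial term contributes (after interchanging the finite sum in $j$ with the series in $k$) a multiple of $\sum_k\chi(wx-k)(wx-k)^{j}=m_j(\chi,wx)$, which is $0$ for $j=1,\dots,r-1$ by \eqref{null_moments}. Only the remainder term survives, and bounding $|f^{(r)}(\xi_k)|\miu\|f^{(r)}\|_\infty$ together with the rescaling $|k/w-x|^{r}=w^{-r}|wx-k|^{r}$ yields $|(G_wf)(x)-f(x)|\miu \|f^{(r)}\|_\infty\,w^{-r}M_r(\chi)/r!$; taking the supremum over $x\in\R$ gives the first claim.

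For the polynomial reproduction property, by linearity of $G_w$ it is enough to treat the monomials $e_j(x)=x^{j}$ for $j=0,1,\dots,r-1$. Writing $k/w=x-(wx-k)/w$ and expanding $\big(x-(wx-k)/w\big)^{j}$ by the binomial theorem, one obtains
\[
(G_we_j)(x)\ =\ \sum_{k\in\Z}\chi(wx-k)\Big(x-\tfrac{wx-k}{w}\Big)^{j}\ =\ \sum_{i=0}^{j}\binom{j}{i}x^{j-i}\Big(-\tfrac{1}{w}\Big)^{i}m_i(\chi,wx);
\]
since $m_0(\chi,wx)=1$ by $(\chi 2)$ and $m_i(\chi,wx)=0$ for $1\miu i\miu j\miu r-1$ by \eqref{null_moments}, only the $i=0$ term remains, so $(G_we_j)(x)=x^{j}$ and hence $G_wp_{r-1}=p_{r-1}$ for every polynomial $p_{r-1}$ of degree $r-1$.

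I do not expect a genuine obstacle here: the argument is the standard ``Taylor expansion $+$ moment conditions'' scheme. The only point that needs some care is the legitimacy of the rearrangements of the series, i.e. the absolute convergence of $\sum_k|\chi(wx-k)|\,|wx-k|^{j}$ for $0\miu j\miu r$. This follows from $(\chi 3)$ and the remark that $M_\nu(\chi)<\infty$ for all $0\miu\nu\miu\beta$ (so the statement tacitly assumes $\beta\mau r$, i.e. $M_r(\chi)<\infty$), together with the boundedness of $f,f',\dots,f^{(r)}$ on $\R$ guaranteed by $f\in C^{(r)}(\R)$.
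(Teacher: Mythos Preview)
Your argument is correct and is precisely the classical ``Taylor expansion $+$ vanishing discrete moments'' proof; the only cosmetic slip is that the $j$-th polynomial term in the first part is actually $(-1)^j w^{-j}\, m_j(\chi,wx)$ rather than $m_j(\chi,wx)$ itself, but since this quantity is zero for $1\le j\le r-1$ the extra factor is irrelevant. Note, however, that the paper does not give its own proof of this statement: Theorem~\ref{samp-gen} is simply quoted from \cite{BUST1}, so there is no in-paper proof to compare against; your write-up is exactly the standard derivation one finds in the Butzer--Stens reference.
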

Conditions like (\ref{null_moments}) is to be found in connection with finite element approximation, see e.g., \cite{FIST1}. 

 In general, to check if a given kernel $\chi$ satisfies assumption (\ref{null_moments}) can be difficult. For this reason, the following lemma can be useful.
\begin{lemma}[\cite{BUST1}] \label{lemma1}
Let $\chi$ be a continuous kernel. Condition (\ref{null_moments}) is equivalent to the following:
$$
(\widehat{\chi})^{(j)}(2\, \pi k)\ =\ \left\{
\begin{array}{l}
1, \hskip1cm k=j=0,\\
0, \hskip1cm k\in \Z\setminus\left\{ 0 \right\}, \hskip0.5cm j=0,\\
0, \hskip1cm k\in \Z, \hskip1.5cm j=1,2, ..., r-1,
\end{array}
\right.
$$
where $\widehat{\chi}(v) :=   \int_{\R} \chi(u)\, e^{-i\, u\, v}\, du$, $v \in \R$, denotes the Fourier transform of $\chi$.
\end{lemma}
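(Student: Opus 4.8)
The plan is to prove Lemma~\ref{lemma1} by relating the discrete moments $m_j(\chi,u)$ to the Fourier transform $\widehat{\chi}$ via the Poisson summation formula. First I would observe that the $1$-periodic function $u \mapsto m_0(\chi,u) = \sum_{k\in\Z}\chi(u-k)$ has, by Poisson summation, the Fourier series $\sum_{k\in\Z}\widehat{\chi}(2\pi k)\,e^{2\pi i k u}$; more generally, since $(u-k)^\beta$ can be generated by differentiating $e^{-i(u-k)v}$ in $v$ and evaluating at $v=0$, the quantity $m_\beta(\chi,u)$ is, up to a factor $i^\beta$, the $\beta$-th $v$-derivative at $v=0$ of $\sum_{k\in\Z}\chi(u-k)e^{-i(u-k)v}$. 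Applying Poisson summation to the latter sum (in the variable $u-k$, with the extra phase $e^{-iuv}$ absorbed) yields an identity expressing $m_\beta(\chi,u)$ as a combination of the values $\widehat{\chi}^{(\ell)}(2\pi k)$, $\ell\le\beta$, weighted by the exponentials $e^{2\pi i k u}$. This is the computational heart of the argument.

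Next I would carry out the induction on $j$ to convert between the two conditions. For $j=0$: condition (\ref{null_moments}) says $m_0(\chi,u)\equiv 1$, which by uniqueness of Fourier coefficients of the $1$-periodic function $m_0(\chi,\cdot)$ is equivalent to $\widehat{\chi}(0)=1$ and $\widehat{\chi}(2\pi k)=0$ for $k\ne 0$ --- exactly the first two lines on the right-hand side in the Lemma. Assuming inductively that the first $r-1$ moment conditions have been translated into the vanishing of $\widehat{\chi}^{(\ell)}(2\pi k)$ for $\ell\le r-2$ (all $k$, except $\ell=0,k=0$ where it is $1$), the Poisson identity above shows that $m_{r-1}(\chi,u)$ reduces --- after the lower-order terms drop out --- to a constant multiple of the single Fourier mode sum $\sum_{k}\widehat{\chi}^{(r-1)}(2\pi k)e^{2\pi i k u}$ plus possibly a term from $\widehat\chi^{(r-1)}(0)$; hence $m_{r-1}(\chi,u)\equiv 0$ is equivalent to $\widehat{\chi}^{(r-1)}(2\pi k)=0$ for every $k\in\Z$, completing the step. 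One has to be slightly careful at the top index $j=r$, where (\ref{null_moments}) prescribes $m_r(\chi,u)=1$; but since the Lemma only concerns $j=0,\dots,r-1$, the statement to be proved is precisely the equivalence of the $j=1,\dots,r-1$ conditions together with the $j=0$ normalization, so the $j=r$ line of (\ref{null_moments}) plays no role here.

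The main obstacle I expect is making the application of Poisson summation rigorous: one must justify that $\sum_{k}\chi(u-k)(u-k)^\beta$ converges (which is guaranteed by $(\chi 3)$ for $\beta\le$ the given exponent, and for the relevant small $\beta$ here), that $\chi$ together with enough of its ``moments'' is regular enough for the summation formula and for interchanging the $v$-differentiation with the sum, and that the resulting periodic functions genuinely have the claimed Fourier expansions in a pointwise (not merely $L^2$) sense. Since $\chi$ is assumed continuous and the absolute moments $M_\beta(\chi)$ control the tails, the dominated-convergence and uniform-convergence arguments needed are routine; the cleanest route is probably to fix the test exponential $e^{-iuv}$, apply Poisson summation to the Schwartz-like (decaying) function $t\mapsto \chi(t)t^\beta e^{-itv}$ for each fixed small $v$, and then differentiate in $v$ at $0$, using that $\widehat{\chi}$ is $C^{r-1}$ because $M_{r-1}(\chi)<\infty$. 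Everything else is bookkeeping on which Fourier mode survives at each order.
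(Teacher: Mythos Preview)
The paper does not prove this lemma at all; it is simply quoted from Butzer--Stens \cite{BUST1}, so there is no proof here to compare against. Your Poisson-summation approach is the standard one and is essentially how the result is established in the cited source.

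Two small remarks. First, the induction you outline is unnecessary: applying Poisson summation to $t\mapsto\chi(t)e^{-itv}$ and differentiating $\beta$ times at $v=0$ yields directly
\[
m_\beta(\chi,u)\;=\;i^{\beta}\sum_{k\in\Z}\widehat{\chi}^{\,(\beta)}(2\pi k)\,e^{2\pi i k u},
\]
so each moment condition translates into the corresponding Fourier condition independently, with no lower-order cross terms to cancel. Second, your observation about the top index is correct: the Fourier conditions listed in the lemma run only through $j=0,\dots,r-1$, hence the equivalence as stated really matches $(\chi 2)$ together with $m_1=\cdots=m_{r-1}=0$; the clause $m_r\equiv 1$ in (\ref{null_moments}) would correspond to the extra requirements $i^{r}\widehat{\chi}^{\,(r)}(0)=1$ and $\widehat{\chi}^{\,(r)}(2\pi k)=0$ for $k\neq 0$, which are not listed. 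This slight asymmetry is a feature of how the lemma is quoted and does not affect any argument in the present paper.
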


  Note that, for the sake of completeness, a high order approximation theorem for the sampling Kantorovich operators, analogous to the above, cannot be proved, see e.g., \cite{BAMA1,BAMA1.1,BAMA2}. Moreover, the rate of convergence for the family $(S_w f)_{w>0}$ has been also studied in \cite{COVI3,COVI4,COVI5} in $C(\R)$, and in the Orlicz spaces $L^{\varphi}(\R)$, by considering functions in suitable Lipschitz classes.
%


\section{Inverse result}  \label{sec3}

In order to prove an inverse result for the sampling Kantorovich series, we needs the following representation formula, which allows to state the relation between the operators $S_w f$ and $G_w f$, when functions belonging to $C^{(r)}(\R)$, $r \in \N^+$, are considered.  
\begin{theorem}  \label{th1}
For any $f \in C^{(r)}(\R)$, $r \in \N^+$, it holds:
$$
(S_wf)(x)\ =\ \sum_{j=0}^{r-1} \frac{w^{-j}}{(j+1)!}\, \left( G_w f^{(j)} \right)(x)\ +\ \mathcal{R}^w_r(x), \hskip1cm x \in \R,
$$
where the remainder of order $r$, is the following absolutely convergent series:
$$
\mathcal{R}^w_r(x)\ :=\ \frac{1}{r!}\, \sum_{k \in \Z}w\, \left[ \int_{k/w}^{(k+1)/w}\!\!\! f^{(r)}\left( \theta_{k, w}(u)\right)\cdot \left(u- k/w \right)^r\, du \right] \chi(wx-k),
$$
where $\theta_{k, w}(u)$ are measurable functions, such that $k/w < \theta_{k, w}(u) < (k+1)/w$, $k \in \Z$, for every $u \in [k/w, (k+1)/w]$, $w>0$.
\end{theorem}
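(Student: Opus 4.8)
The plan is to expand each mean value $w\int_{k/w}^{(k+1)/w} f(u)\,du$ that appears in $(S_wf)(x)$ by a Taylor expansion of $f$ around the left endpoint $k/w$, with integral-type (or Lagrange-type) remainder of order $r$. Concretely, for $f \in C^{(r)}(\R)$ and $u \in [k/w,(k+1)/w]$ one writes
$$
f(u)\ =\ \sum_{j=0}^{r-1} \frac{f^{(j)}(k/w)}{j!}\,(u-k/w)^j\ +\ \frac{1}{(r-1)!}\,\int_{k/w}^{u} f^{(r)}(t)\,(u-t)^{r-1}\,dt .
$$
Multiplying by $w$ and integrating over $u \in [k/w,(k+1)/w]$, the $j$-th polynomial term contributes $w\int_{k/w}^{(k+1)/w}(u-k/w)^j\,du = w\cdot \frac{(1/w)^{j+1}}{j+1} = \frac{w^{-j}}{j+1}$, so that the $j$-th summand becomes $\frac{w^{-j}}{(j+1)!}\,f^{(j)}(k/w)$. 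Summing against $\chi(wx-k)$ over $k \in \Z$ and recognizing $\sum_{k} \chi(wx-k)\,f^{(j)}(k/w) = (G_w f^{(j)})(x)$ yields exactly the asserted main term $\sum_{j=0}^{r-1}\frac{w^{-j}}{(j+1)!}(G_wf^{(j)})(x)$, while the collection of remainder terms forms $\mathcal{R}^w_r(x)$.

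Next I would recast the double integral in the remainder into the stated single-integral form. Swapping the order of integration in $w\int_{k/w}^{(k+1)/w}\!\!\big(\int_{k/w}^{u} f^{(r)}(t)(u-t)^{r-1}\,dt\big)du$, or more directly applying the mean value theorem for integrals to $u \mapsto \frac{1}{(r-1)!}\int_{k/w}^{u} f^{(r)}(t)(u-t)^{r-1}\,dt$, the Taylor remainder on $[k/w,(k+1)/w]$ can be written as $\frac{f^{(r)}(\theta_{k,w}(u))}{r!}(u-k/w)^r$ for some $\theta_{k,w}(u) \in (k/w,(k+1)/w)$; measurability of $u \mapsto \theta_{k,w}(u)$ follows because $\theta_{k,w}(u)$ can be selected as, e.g., the infimum of the set where the remainder equals its value via the continuous function $f^{(r)}$, so that $\theta_{k,w}$ is the composition/inverse of monotone-ish continuous data and hence Borel. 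After this substitution, $\mathcal{R}^w_r(x) = \frac{1}{r!}\sum_{k\in\Z} w\big[\int_{k/w}^{(k+1)/w} f^{(r)}(\theta_{k,w}(u))(u-k/w)^r\,du\big]\chi(wx-k)$, which is the claimed expression.

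Finally, I would verify absolute convergence of $\mathcal{R}^w_r(x)$ (and hence legitimacy of the rearrangements above). Since $|f^{(r)}(\theta_{k,w}(u))| \le \|f^{(r)}\|_\infty$ and $|u-k/w|^r \le w^{-r}$ on the relevant interval, the $k$-th term is bounded by $\frac{\|f^{(r)}\|_\infty}{r!}\,w\cdot w^{-r}\cdot w^{-1}\,|\chi(wx-k)| = \frac{\|f^{(r)}\|_\infty}{r!}\,w^{-r}\,|\chi(wx-k)|$, and $\sum_{k}|\chi(wx-k)| \le M_0(\chi) < \infty$ by $(\chi3)$ (together with the remark that $(\chi3)$ implies finiteness of all lower-order absolute moments, in particular $M_0(\chi)$). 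This simultaneously justifies interchanging summation and integration. The same bound shows each series $(G_wf^{(j)})(x)$ converges absolutely since $f^{(j)} \in C(\R)$ is bounded and $M_0(\chi)<\infty$. The only mildly delicate point — and the one I expect to require the most care — is the measurability of $\theta_{k,w}$; everything else is a routine Taylor expansion combined with the standard absolute-moment estimates already available from the kernel assumptions.
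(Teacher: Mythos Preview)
Your proposal is correct and follows essentially the same approach as the paper: Taylor expansion of $f$ about $k/w$ with Lagrange remainder, exact evaluation of $w\int_{k/w}^{(k+1)/w}(u-k/w)^j\,du = w^{-j}/(j+1)$, and the absolute-convergence bound via $\|f^{(r)}\|_\infty$ and $M_0(\chi)$. The only cosmetic differences are that the paper starts directly from the Lagrange form of the remainder (rather than passing through the integral form) and integrates $(u-k/w)^r$ exactly to obtain the slightly sharper constant $\|f^{(r)}\|_\infty/(r+1)!$; conversely, you address the measurability of $\theta_{k,w}(\cdot)$, which the paper simply asserts.
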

\begin{proof}
By considering the Taylor formula with the Lagrange remainder, applied to $f$, we have:
$$
f(u)\ =\ \sum_{j=0}^{r-1} \frac{f^{(j)}(x)}{j!}\, (u-x)^j\ +\ \frac{f^{(r)}(\theta_{u,x})}{r!}\, (u-x)^r,
$$
for $x$, $u \in \R$, and $\theta_{u,x} \in (x,u)$. Now, if we set $x=k/w$, $k\in \Z$ and $w>0$, in the above formula, for every $u \in ( k/w, (k+1)/w]$ it turns out that $k/w<\theta_{u,k/w}=:\theta_{k,w}(u)<(k+1)/w$. Then, replacing the above expansion with $x=k/w$ in the integrals $w \int_{k/w}^{(k+1)/w}f(u)\, du$, we can write what follows:
\vskip0.1cm
$$
\hskip-7cm w \int_{k/w}^{(k+1)/w} f(u)\, du\ 
$$
$$
=\ w \int_{k/w}^{(k+1)/w}\left[ \sum_{j=0}^{r-1} \frac{f^{(j)}(k/w)}{j!}\, (u-k/w)^j\ +\ \frac{f^{(r)}(\theta_{k,w}(u))}{r!}\, (u-k/w)^r \right]\, du
$$
$$
=\, w\, \sum_{j=0}^{r-1} \frac{f^{(j)}(k/w)}{j!}\int_{k/w}^{(k+1)/w}\!\!\!\!(u-k/w)^j\ du\, +\, \frac{w}{r!} \int_{k/w}^{(k+1)/w}\!\!\!\!\!f^{(r)}(\theta_{k,w}(u))\, (u-k/w)^r\, du
$$
\be \label{medie_espanse}
\hskip-1.9cm =\ \sum_{j=0}^{r-1} \frac{f^{(j)}(k/w)}{(j+1)! }\, w^{-j} +\ \frac{w}{r!} \int_{k/w}^{(k+1)/w}f^{(r)}(\theta_{k,w}(u))\, (u-k/w)^r\, du.
\ee
Now, by exploiting (\ref{medie_espanse}) in the definition of $(S_w f)(x)$, $x \in \R$, we obtain:
$$
\hskip-8.5cm (S_w f)(x)\ =\ 
$$
$$
= \sum_{k \in \Z}\chi(wx-k)\! \left[ \sum_{j=0}^{r-1} \frac{f^{(j)}(k/w)}{(j+1)! }\, w^{-j} + \frac{w}{r!} \int_{k/w}^{(k+1)/w}\!\!\!f^{(r)}(\theta_{k,w}(u))\, (u-k/w)^r\, du \right]
$$
$$
\hskip-6cm =\ \sum_{j=0}^{r-1}\frac{w^{-j}}{(j+1)! }\,  \sum_{k \in \Z}\chi(wx-k)\, f^{(j)}\left(\frac{k}{w}\right)\ 
$$
$$
\hskip2.5cm +\ \frac{1}{r!}\, \sum_{k \in \Z}\chi(wx-k)\! \left[w\,  \int_{k/w}^{(k+1)/w}\!\!\!f^{(r)}(\theta_{k,w}(u))\, (u-k/w)^r\, du \right]
$$
$$
\hskip-6cm =\ \sum_{j=0}^{r-1}\frac{w^{-j}}{(j+1)! }\,  \left( G_w f^{(j)}\right)(x)\ +\ {\cal R}^w_r(x),
$$
for every $w>0$, where:
$$
{\cal R}^w_r(x)\ :=\ \frac{1}{r!}\, \sum_{k \in \Z}\chi(wx-k)\! \left[w\,  \int_{k/w}^{(k+1)/w}\!\!\!f^{(r)}(\theta_{k,w}(u))\, (u-k/w)^r\, du \right].
$$ 
Note that, the series ${\cal R}^w_r(x)$ is absolutely convergent for every $x \in \R$, for every $w>0$. Indeed,
$$
\frac{1}{r!}\, \sum_{k \in \Z}|\chi(wx-k)|\! \left|w\,  \int_{k/w}^{(k+1)/w}\!\!\!f^{(r)}(\theta_{k,w}(u))\, (u-k/w)^r\, du \right| 
$$
\be \label{ord-r}
\miu\ \frac{\|f^{(r)} \|_{\infty}}{(r+1)!}\, w^{-r}\, \sum_{k \in \Z}|\chi(wx-k)|\ \miu\ \frac{\|f^{(r)} \|_{\infty}}{(r+1)!}\, w^{-r}\, M_0(\chi)\ <\ +\infty.
\ee
This completes the proof.
\end{proof}
\begin{remark} \rm
Note that, by (\ref{ord-r}) easily follows that the remainder ${\cal R}^w_r(x)$ in the representation formula of Theorem \ref{th1}, is such that:
$$
{\cal R}^w_r(x)\ =\ {\cal O}(w^{-r}), \hskip1cm \mbox{as} \hskip0.5cm w \to +\infty,
$$
for every $x \in \R$. 
\end{remark}
Now, we can state the main result of this section.
\begin{theorem} \label{th2}
Let $\chi$ be a kernel, which satisfies the moment condition (\ref{null_moments}), for every $u \in \R$, with $r = 2$. Now, let $f \in C(\R)$, and suppose in addition that:
\be \label{opiccolo-uniform}
\| S^\pi_w f - f \|_{\infty}\ =\ o(w^{-1}), \hskip1cm as \hskip0.5cm w \to +\infty,
\ee
uniformly with respect to every sequence $\pi = (t_k)_{k \in \Z} \subset \R$, such that $\lim_{k \to \pm \infty} t_k=\pm \infty$, with $t_{k+1}-t_k=1$, $k \in \Z$, and where:
$$
(S^\pi_w f)(x)\ :=\ \sum_{k \in \Z}\left[ w \int_{t_k/w}^{t_{k+1}/w} f(u)\, du\right] \chi(wx-t_k), \hskip1cm x \in \R.
$$
Then, $f$ is constant over $\R$.
\end{theorem}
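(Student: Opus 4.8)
The plan is to reduce the statement to the smooth case by mollification, establish the saturation property for the subspace $C^{(2)}(\R)$ through the representation formula of Theorem~\ref{th1}, and then remove the smoothing. Throughout, fix a standard mollifier $\varphi_\delta(t):=\delta^{-1}\varphi(t/\delta)$, $\delta>0$, with $\varphi\in C^{\infty}_c(\R)$, $\varphi\mau 0$, $\int_{\R}\varphi=1$, so that $\|\varphi_\delta\|_1=1$, $g_\delta:=f*\varphi_\delta\in C^{\infty}(\R)\subset C^{(2)}(\R)$ (all its derivatives are bounded and uniformly continuous), and $\|g_\delta-f\|_{\infty}\to 0$ as $\delta\to 0^+$ by the uniform continuity of $f$.

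First I would show that each $g_\delta$ inherits the hypothesis (\ref{opiccolo-uniform}). Writing $(f*\varphi_\delta)(u)=\int_{\R}\varphi_\delta(t)f(u-t)\,dt$, performing the change of variable $w\int_{k/w}^{(k+1)/w}f(u-t)\,du=w\int_{k/w-t}^{(k+1)/w-t}f(v)\,dv$, and interchanging $\sum_{k\in\Z}$ with $\int dt$ (legitimate since $M_0(\chi)<+\infty$ and $f\in L^{\infty}(\R)$), one recognizes
$$
\sum_{k\in\Z}\chi(wx-k)\,w\!\int_{k/w-t}^{(k+1)/w-t}\!\!\!f(v)\,dv\ =\ (S^{\pi(t)}_w f)(x-t),
$$
where $\pi(t)=(k-wt)_{k\in\Z}$ is an admissible sequence (spacing $1$, tending to $\pm\infty$ at $\pm\infty$). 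Hence
$$
(S_w g_\delta)(x)-g_\delta(x)\ =\ \int_{\R}\varphi_\delta(t)\,\big[(S^{\pi(t)}_w f)(x-t)-f(x-t)\big]\,dt,
$$
so that $\|S_w g_\delta-g_\delta\|_{\infty}\ \miu\ \|\varphi_\delta\|_1\,\sup_{\pi}\|S^{\pi}_w f-f\|_{\infty}=o(w^{-1})$ by the uniform hypothesis (\ref{opiccolo-uniform}).

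Next I would apply the representation formula of Theorem~\ref{th1} with $r=2$ to $g_\delta\in C^{(2)}(\R)$, obtaining $(S_w g_\delta)(x)=(G_w g_\delta)(x)+\tfrac{w^{-1}}{2}(G_w g_\delta')(x)+\mathcal{R}^w_2(x)$. Since $\chi$ satisfies (\ref{null_moments}) with $r=2$, Theorem~\ref{samp-gen} yields $\|G_w g_\delta-g_\delta\|_{\infty}\miu \|g_\delta''\|_{\infty}\,\frac{M_2(\chi)}{2}\,w^{-2}=\mathcal{O}(w^{-2})$, while the Remark following Theorem~\ref{th1} gives $\|\mathcal{R}^w_2\|_{\infty}=\mathcal{O}(w^{-2})$. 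Combining with the previous step,
$$
\tfrac{w^{-1}}{2}\,\|G_w g_\delta'\|_{\infty}\ \miu\ \|S_w g_\delta-g_\delta\|_{\infty}+\|G_w g_\delta-g_\delta\|_{\infty}+\|\mathcal{R}^w_2\|_{\infty}\ =\ o(w^{-1}),
$$
so $\|G_w g_\delta'\|_{\infty}\to 0$ as $w\to+\infty$. On the other hand $g_\delta'\in C(\R)$, hence by the uniform convergence theorem for the generalized sampling operators $\|G_w g_\delta'-g_\delta'\|_{\infty}\to 0$, and letting $w\to+\infty$ forces $g_\delta'\equiv 0$, i.e., $g_\delta=f*\varphi_\delta$ equals a constant $c_\delta$ on $\R$. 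Finally, from $|c_{\delta_1}-c_{\delta_2}|\miu\|c_{\delta_1}-f\|_{\infty}+\|f-c_{\delta_2}\|_{\infty}\to 0$ the numbers $c_\delta$ converge to some $c\in\R$, and $\|f-c\|_{\infty}\miu\|f-c_\delta\|_{\infty}+|c_\delta-c|\to 0$, so $f\equiv c$.

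I expect the main obstacle to be the first step: convolution with $\varphi_\delta$ replaces the Kantorovich means over the lattice $\{k/w\}$ by means over the shifted lattices $\{k/w-t\}$, so the vanishing rate must be available for all such perturbed grids — this is precisely why (\ref{opiccolo-uniform}) is imposed uniformly over the sequences $\pi$, and the identity $\sum_{k}\chi(wx-k)\,w\!\int_{k/w-t}^{(k+1)/w-t}f=(S^{\pi(t)}_w f)(x-t)$, together with the Fubini-type interchange, is the technical heart of the argument. Once $g_\delta\in C^{(2)}(\R)$ is brought under control, the saturation step and the passage to the limit are routine bookkeeping on top of Theorems~\ref{th1} and~\ref{samp-gen}.
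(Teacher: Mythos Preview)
Your proof is correct and follows the same architecture as the paper's: mollify $f$ into $C^{(2)}(\R)$, recognize that the Kantorovich operator applied to the convolution equals an average over translates of $S^{\pi}_w f$ with shifted unit grids $\pi(t)=(k-wt)_{k\in\Z}$, invoke the uniform hypothesis (\ref{opiccolo-uniform}) to transfer the $o(w^{-1})$ rate to the smoothed function, and then deduce that its derivative vanishes.

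Where you diverge is in two streamlinings. First, for the smooth step the paper proves a separate Theorem~\ref{thC2} using the representation formula at order $r=1$ and showing $2w\,\mathcal{R}^w_1\to g'$; you instead expand at order $r=2$, so that the derivative appears explicitly as $\tfrac{w^{-1}}{2}\,G_w g_\delta'$ and the remaining pieces are $\mathcal{O}(w^{-2})$ by Theorem~\ref{samp-gen} and the Remark after Theorem~\ref{th1}. This avoids the separate analysis of the Lagrange remainder and is a clean shortcut. Second, your endgame is more direct: since the mollifiers approximate $f$ uniformly, the constants $c_\delta$ converge and $f$ is their limit; the paper instead argues via the vanishing of $\int\Phi(y)[f(-y)-f(x-y)]\,dy$ for all test functions and a contradiction. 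Both routes are valid; yours is shorter, while the paper's makes the role of the test-function pairing more explicit.
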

Note that, assumption (\ref{opiccolo-uniform}) which involves the operators (\ref{KANTO}) for a general sampling scheme $\pi=(t_k)_{k \in \Z} \subseteq \R$, is meaningful and not restrictive, in view of the results concerning the order of approximation proved in \cite{COVI3}, for the series $S^\pi_w$.

Moreover, we also point out that, to prove the above theorem, it is sufficient that the assumptions on $\chi$ are satisfied for the sequence $t_k=k$, $k \in \Z$ (as in the form given in Section \ref{sec2}).
 
In order to obtain the proof of Theorem \ref{th2}, we firstly prove the above result for functions belonging to $C^{(2)}(\R)$. We have the following.
\begin{theorem} \label{thC2}
Let $\chi$ be a kernel, which satisfies the moment condition (\ref{null_moments}), for every $u \in \R$, with $r = 2$. Now, let $f \in C^{(2)}(\R)$, and suppose that:
\be \label{opiccolo}
\| S_w f - f \|_{\infty}\ =\ o(w^{-1}), \hskip1cm as \hskip0.5cm w \to +\infty.
\ee
Then, it turns out that $f$ is constant on $\R$.
\end{theorem}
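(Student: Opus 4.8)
The plan is to push the hypothesis through the representation formula of Theorem~\ref{th1} with $r=2$ and read off the ``order one'' term. For $f\in C^{(2)}(\R)$ that formula specializes to
$$
(S_wf)(x)\ =\ (G_wf)(x)\ +\ \frac{w^{-1}}{2}\,(G_wf')(x)\ +\ \mathcal{R}^w_2(x),\qquad x\in\R,
$$
so, multiplying by $w$ and subtracting $wf$,
$$
w\bigl[(S_wf)(x)-f(x)\bigr]\ =\ w\bigl[(G_wf)(x)-f(x)\bigr]\ +\ \frac12\,(G_wf')(x)\ +\ w\,\mathcal{R}^w_2(x).
$$
I would then argue that the first and third summands on the right vanish uniformly, that the middle one converges uniformly to $\tfrac12 f'$, and that the left side vanishes uniformly by hypothesis~(\ref{opiccolo}); uniqueness of the limit then forces $f'\equiv 0$.

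The three estimates are all immediate from results already available. Since $\chi$ satisfies~(\ref{null_moments}) with $r=2$, Theorem~\ref{samp-gen} gives $\|G_wf-f\|_\infty\le\|f''\|_\infty\,M_2(\chi)\,w^{-2}/2$, whence $w\|G_wf-f\|_\infty=\mathcal{O}(w^{-1})\to 0$. The remainder bound~(\ref{ord-r}) with $r=2$ yields $\|\mathcal{R}^w_2\|_\infty\le\|f''\|_\infty\,M_0(\chi)\,w^{-2}/6$, so $w\|\mathcal{R}^w_2\|_\infty\to 0$. Finally $f'\in C(\R)$ because $f\in C^{(2)}(\R)$, so the uniform convergence of the generalized sampling operators gives $\|G_wf'-f'\|_\infty\to 0$. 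Combining the three, $w(S_wf-f)\to\tfrac12 f'$ in the supremum norm.

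On the other hand, (\ref{opiccolo}) says exactly that $w\|S_wf-f\|_\infty\to 0$, i.e. $w(S_wf-f)\to 0$ uniformly on $\R$. Equating the two limits gives $\tfrac12 f'(x)=0$ for all $x\in\R$, hence $f'\equiv 0$ and $f$ is constant on $\R$.

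I do not anticipate a real obstacle here: the whole point is that for $C^{(2)}$ functions the error $G_wf-f$ and the remainder $\mathcal{R}^w_2$ both decay like $w^{-2}$, one order faster than the term $\tfrac{w^{-1}}{2}G_wf'$, so the latter's coefficient $\tfrac12 f'$ is forced to be the obstruction to faster-than-$w^{-1}$ approximation. The only minor points needing care are that every convergence above must be uniform in $x$ (which it is, being a supremum-norm statement, so the pointwise conclusion $f'\equiv 0$ is legitimate) and that the moment condition with $r=2$, together with the standing kernel assumptions $(\chi i)$, does guarantee $M_2(\chi)<\infty$, as required to invoke Theorem~\ref{samp-gen}.
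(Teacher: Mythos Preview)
Your argument is correct and in fact streamlines the paper's own proof. The paper applies the representation formula of Theorem~\ref{th1} only to order $r=1$, obtaining $(S_wf)(x)=(G_wf)(x)+\mathcal{R}^w_1(x)$, and must then establish by a direct $\varepsilon$--$\delta$ estimate (exploiting the uniform continuity of $f'$) that $2w\,\mathcal{R}^w_1\to f'$ uniformly; only after this does the combination with $w(G_wf-f)\to 0$ and the hypothesis yield $f'\equiv 0$. By pushing the expansion one step further to $r=2$, you make the $f'$ contribution appear explicitly as $\tfrac12\,G_wf'$ and relegate the remainder to $\mathcal{R}^w_2$, whose $\mathcal{O}(w^{-2})$ bound is already contained in~(\ref{ord-r}); the convergence $G_wf'\to f'$ then comes for free from the standard approximation property of $G_w$. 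Your route is shorter and avoids the hands-on remainder analysis, while the paper's version has the mild conceptual advantage of identifying $2w\,\mathcal{R}^w_1$ itself as the order-one obstruction before unpacking it. One small caution: your closing claim that the moment condition together with $(\chi i)$ ``does guarantee $M_2(\chi)<\infty$'' is not literally justified---condition~(\ref{null_moments}) concerns the signed moments $m_j$, and $(\chi 3)$ only provides $M_\beta(\chi)<\infty$ for \emph{some} $\beta>0$---but the paper's proof tacitly relies on the same finiteness when it invokes Theorem~\ref{samp-gen}, so your argument stands on equal footing in this respect.
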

\begin{proof}
Since $f$ belongs to $C^{(2)}(\R)$, the representation formula of Theorem \ref{th1} can be applied, e.g., until order $r=1$, i.e., for every $x \in \R$ we can write:
$$
(S_w f)(x)\ =\ (G_w f)(x)\ +\ {\cal R}^w_1(x),
$$ 
for every $w>0$, then assumption (\ref{opiccolo}) can be rewritten as follows:
$$
|(G_w f)(x)\ +\ {\cal R}^w_1(x) - f(x)|\ =\ o(w^{-1}), \hskip1cm as \hskip0.5cm w \to +\infty,
$$
i.e., 
$$
\lim_{w\to +\infty} w \left[  (G_w f)(x)\ +\ {\cal R}^w_1(x)\ -\ f(x)   \right]\ =\ 0,
$$
for every $x \in \R$. Now splitting the above limit (since as we will show below they exist and are both finite), we can write:
\be \label{hhhjkl}
\lim_{w\to +\infty} w \left[  (G_w f)(x)\ -\ f(x)\right]\ +\ \frac{1}{2}\, \lim_{w\to +\infty} 2\, w\, {\cal R}^w_1(x)\ =\ 0.
\ee
Now, since (\ref{null_moments}) is satisfied for $r=2$, in view of Theorem \ref{samp-gen} we know that $\| G_w f - f\|={\cal O}(w^{-2})$, as $w\to +\infty$, then it is easy to see that:
$$
\lim_{w\to +\infty} w \left[  (G_w f)(x)\ -\ f(x)\right]\ =\ 0,
$$ 
so we can deduce from (\ref{hhhjkl}) that:
\be \label{mjbgsxh}
\lim_{w\to +\infty} 2\, w\, {\cal R}^w_1(x)\ =\ 0.
\ee
Now, we claim that the family $(2\, w\, {\cal R}^w_1)_{w>0}$ converges uniformly (then also pointwise) to $f'$ on $\R$.
In order to prove the above statement, we proceed by estimating:
$$
\hskip-8cm \left| 2\, w\, {\cal R}^w_1(x)\ -\ f'(x) \right|\ 
$$
$$
\miu\ \left| 2\, w\, {\cal R}^w_1(x)\ -\ (G_wf')(x) \right|\ +\ \left| (G_wf')(x)\ -\ f'(x) \right|\ =:\ I_1\ +\ I_2,
$$
$w>0$. Let now $\ep>0$ be fixed. Since $f'$ is uniformly continuous and bounded, by the well-know convergence results concerning the generalized sampling series, we immediately have that $I_2 < \ep$, for sufficiently large $w>0$, see e.g., \cite{BUST1,BABUSTVI3}. Now, we estimate $I_1$. We can write what follows:
$$
I_1\ \miu\ \sum_{k \in \Z} \left|2\, w^2\,  \int_{k/w}^{(k+1)/w}\!\! f'(\theta_{k,w}(u))\, (u-k/w)\, du\ -\ f'(k/w)   \right|\, |\chi(wx-k)|.
$$
For any $k \in \Z$, and sufficiently large $w>0$ we have:
$$
\hskip-3cm \left|2\, w^2\,  \int_{k/w}^{(k+1)/w}\!\! f'(\theta_{k,w}(u))\, (u-k/w)\, du\ -\ f'(k/w) \right|
$$
$$
=\ \left|2 w^2  \int_{k/w}^{(k+1)/w}\!\!\!\! f'(\theta_{k,w}(u))\, (u-k/w)\, du - 2 w^2 f'(k/w)\int_{k/w}^{(k+1)/w}\!\!\!\!(u-k/w)\, du \right|
$$
\be \label{bohh1}
\miu\ 2\, w^2\, \int_{k/w}^{(k+1)/w}\left|f'(\theta_{k,w}(u))-f'(k/w)\right|\, (u-k/w)\, du,
\ee
where $k/w<\theta_{k,w}(u)<(k+1)/w$. Now, since $f'$ is uniformly continuous, and $\theta_{k,w}(u)-k/w \miu 1/w$, we have that, in correspondence of $\ep>0$, 
\be \label{unif-cont}
\left|f'(\theta_{k,w}(u))-f'(k/w)\right|\ <\ \ep,
\ee
for sufficiently large $w>0$. Now, replacing (\ref{unif-cont}) in (\ref{bohh1}) we finally obtain :
$$
\left|2\, w^2\,  \int_{k/w}^{(k+1)/w}\!\! f'(\theta_{k,w}(u))\, (u-k/w)\, du\ -\ f'(k/w) \right|\ <\ \ep.
$$
In conclusion, we have:
$$
I_1\ \miu\ \ep\, \sum_{k \in \Z} |\chi(wx-k)|\ \miu\ \ep\, M_0(\chi),
$$
for $w>0$ sufficiently large, then the above claim is now proved, i.e., 
\be \label{kiiooo}
\lim_{w \to +\infty} 2\, w\, {\cal R}^w_1(x)=f'(x)
\ee 
for every $x \in \R$. Then, in view of (\ref{mjbgsxh}) and (\ref{kiiooo}) we obtain that $f'(x)=0$, for every $x \in \R$, i.e, $f$ is constant on the whole $\R$.
\end{proof}
Now, we are able to provide the proof of Theorem \ref{th2}. 
\begin{proof}[Proof of Theorem \ref{th2}]
Let $f \in C(\R)$ be fixed, such that (\ref{opiccolo-uniform}) is satisfied. Moreover, let $\Phi \in C^{\infty}_{c}(\R)$ be a test function. We denote by:
$$
F_{\Phi}(x)\ :=\ (\Phi * f)(x)\ =\ \int_{\R} \Phi(x-t)\, f(t)\, dt, \hskip1cm x \in \R,
$$
where ``$*$'' denotes the usual convolution product. Note that, $\F(x)$ is well-defined since $f$ is continuous then it belongs to $L^1_{Loc}(\R)$, and in view of the regularization properties of ``$*$'', it turns out that $\F$ belongs, e.g., to $C^{(2)}(\R)$. Indeed, it is easy to see that both the first and the second derivative of $\F$ are uniformly continuous, together with $\F$ itself, in view of the uniform continuity of $f$. Now, for every fixed $x \in \R$, by exploiting condition $(\chi 2)$ and Fubini-Tonelli theorem, we can write what follows:
$$
(S_w \F)(x) - \F(x)\ =\ \sum_{k \in \Z}\left\{ w \int_{k/w}^{(k+1)/w} [\F(u)-\F(x)]\ du\right\}\, \chi(wx-k)
$$
$$
=\  \sum_{k \in \Z}\left\{ w \int_{k/w}^{(k+1)/w} \left[\int_{\R}\Phi(u-t)f(t)\, dt\, -\, \int_{\R}\Phi(x-t)f(t)\, dt\right] du\right\} \chi(wx-k)
$$
$$
=\  \sum_{k \in \Z}\left\{ w \int_{k/w}^{(k+1)/w} \left[\int_{\R}\Phi(y)f(x-y)\, dy - \int_{\R}\Phi(y)f(u-y)\, dy\right] du\right\} \chi(wx-k)
$$
$$
\hskip-0.4cm =\ \sum_{k \in \Z}\left\{ w \int_{k/w}^{(k+1)/w} \left( \int_{\R}\Phi(y)\left[f(x-y)\,-\, f(u-y)\right]dy\right) du  \right\} \chi(wx-k)
$$
$$
=\ \sum_{k \in \Z}\int_{\R}\Phi(y) \left\{  w  \left( \int_{k/w}^{(k+1)/w}\left[f(x-y)\,-\, f(u-y)\right]du\right)\ \chi(wx-k) \right\} dy.
$$
Now, if we set:
$$
\sum_{k \in \Z}\Phi(y) \left\{  w  \left( \int_{k/w}^{(k+1)/w}\left[f(x-y)\,-\, f(u-y)\right]du\right)\ \chi(wx-k) \right\}=: \sum_{k \in \Z}h_k(y),
$$
we have that the above series is absolutely convergent (hence also convergent) for every $y \in \R$, since:
$$
\sum_{k \in \Z}|h_k(y)|\ \miu\ 2\|\Phi\|_{\infty}\, \|f\|_{\infty}\, M_0(\chi)\ <\ +\infty,
$$
and moreover, for every $n \in \N^+$:
$$
\left|\sum_{k=-n}^n h_k(y)\right|\ \miu\ 2\, \|f\|_{\infty}\, M_0(\chi)\, |\Phi(y)|\ =:\ H(y),\, \hskip1cm y \in \R,
$$
with $H \in L^1(\R)$. Then, by the Lebesgue dominated convergence theorem, we can write:
$$
\hskip-8cm (S_w \F)(x)\ -\ \F(x)
$$
$$
\hskip-0.1cm=\ \int_{\R}\Phi(y)\, \left( \sum_{k \in \Z}\left\{ w \int_{k/w}^{(k+1)/w}\left[f(x-y)\,-\, f(u-y)\right]du  \right\} \chi(wx-k)\, \right) dy.
$$
Now, by setting $g_y(x):=f(x-y)$, for every $x \in \R$, and $y \in \R$, by using $H\ddot{o}lder$ inequality, we obtain:
$$
\hskip-1.4cm \left|(S_w \F)(x) - \F(x)\right|\ =\ \left|\int_{\R}\Phi(y)\, \left[g_y(x)- (S_w g_y)(x) \right]\, dy\right|
$$
\be \label{piripi}
\hskip0.8cm \miu \int_{\R}\left|\Phi(y)\right|\, \left| (S_w g_y)(x) - g_y(x) \right|\, dy \miu \| \Phi\|_1\, \| S_w g_{(\cdot)} - g_{(\cdot)}\|_{\infty},
\ee
for every $x \in \R$, where:
\be \label{norma}
 \| (S_w g_{(\cdot)})(x)-g_{(\cdot)}(x) \|_{\infty}= \sup_{y\in\mathbb{R}} \left| (S_w g_{y})(x)-g_{y}(x) \right|,  
\ee
for fixed $x$ and $w$. Now, using respectively the changes of variables $u-y=t$ and $k-yw=:t_{k}^{(y, w)},\ k\in\mathbb{Z}$, we obtain:
$$
\hskip-2.0cm (S_w g_{y})(x)=\displaystyle{\sum\limits_{k\in\mathbb{Z}}\left[ w\int_{\frac{k}{w}}^{\frac{k+1}{w}}g_{y}(u)\, du \right] \chi(wx-k)}
$$
$$
=\displaystyle{\sum\limits_{k\in\mathbb{Z}}\left[ w\int_{\frac{k}{w}}^{\frac{k+1}{w}}f(u-y)\, du \right] \chi(wx-k)}
$$
$$
=\displaystyle{\sum\limits_{k\in\mathbb{Z}}\left[ w\int_{\frac{k}{w}-y}^{\frac{k+1}{w}-y}f(t)\, dt \right] \chi(wx-k)}
$$
$$
=\displaystyle{\sum\limits_{k\in\mathbb{Z}}\left[ w\int_{\frac{k-yw}{w}}^{\frac{k+1-yw}{w}}f(t)\, dt \right] \chi(wx-k)}
$$
$$
=\displaystyle{\sum\limits_{k\in\mathbb{Z}}\left[ w\int^{\frac{t^{(y, w)}_{k}+1}{w}}_{\frac{t^{(y, w)}_{k}}{w}}f(t)\, dt \right] \chi\left(wx-(t^{(y, w)}_{k}+yw)\right)}
$$
$$
=\displaystyle{\sum\limits_{k\in\mathbb{Z}}\left[ w\int_{\frac{t^{(y, w)}_{k}}{w}}^{\frac{t^{(y, w)}_{k}+1}{w}}f(t)dt \right] \chi\left(w(x-y)-t^{(y, w)}_{k})\right)=\left(S_w^{\pi^w_y}f\right)(x-y)},
$$
where $\pi^w_y=(t^{(y, w)}_{k})_{k\in\mathbb{Z}}$, for every $y\in\mathbb{R}$. Now, it is easy to observe that $\lim_{k \to \pm \infty}t^{(y, w)}_{k} = \pm \infty$, and:
$$
t^{(y, w)}_{k+1}-t^{(y, w)}_{k}\ =\ k+1-yw -k + yw\ =\ 1,
$$
for every $k \in \Z$. Hence, (\ref{norma}) becomes:
$$
\sup_{y\in\mathbb{R}} \left|(S_wg_{y})(x)-g_{y}(x) \right|\ =\ \sup_{y\in\mathbb{R}} \left| (S_w^{\pi^w_y}f)(x-y)-f(x-y) \right|.
$$
In view of the above equality, since all the sequences of the form $\pi^w_y$, $y \in \R$, $w>0$, satisfy the conditions required in assumption (\ref{opiccolo-uniform}), and $\| \Phi\|_1<+\infty$, using (\ref{piripi}) we finally have:
$$
\|S_w \F - \F\|_{\infty}\ =\ o(w^{-1}), \hskip1cm as \hskip0.5cm w \to +\infty,
$$
for every test function $\Phi \in C^{\infty}_{c}(\R)$. We have proved that any $\F$ satisfies the assumptions of Theorem \ref{thC2}, then it turns out that $\F(x)=k$, for every $x \in \R$, for a suitable constant $k \in \R$.
Thus, for every $x \in \R$ we have:
$$
0\ =\ \F(x)\ -\ \F(0)\ =\ \int_{\R}\Phi(x-t)f(t)\, dt\ -\ \int_{\R}\Phi(-t)f(t)\, dt
$$
$$
=\ \int_{\R}\Phi(y)f(-y)\, dy\ -\ \int_{\R}\Phi(y)f(x-y)\, dy\ =\ \int_{\R}\Phi(y)\, [ f(-y) - f(x-y)]\, dy,
$$
where the equality:
$$
\int_{\R}\Phi(y)\, [ f(-y) - f(x-y)]\, dy\ =\ 0,
$$
holds for every test function $\Phi \in C^{\infty}_{c}(\R)$.

Now, in order to conclude the proof, we suppose by contradiction that $f$ is not constant on $\mathbb{R}$, i.e., that there exists $x_0<y_0$ such that $f(x_0) \neq f(y_0)$. Let now $\widetilde{x} \in \mathbb{R}$ such that $\widetilde{x} + y_0 = x_0$, and let $n \in \mathbb{N}^+$ sufficiently large, such that $y_0 \in I_n:=(-n, n)$ (then also $-y_0 \in I_n$). Then, for every $\Phi\in C_c^{\infty}(I_n)$, we have:
\begin{equation*}
\displaystyle{\int_{-n}^n\Phi(y)[f(-y)-f(\widetilde{x}-y)]dy=\int_{\mathbb{R}}\widetilde{\Phi}(y)[f(-y)-f(\widetilde{x}-y)]dy=0,}
\end{equation*}
where $\widetilde{\Phi}$ denotes the zero-extension of $\Phi$ to the whole $\mathbb{R}$. Since the above equality holds for every $\Phi\in C_c^{\infty}(I_n)$, and $f$ is continuous on $\mathbb{R}$, it turns out that (see \cite{B1}):
\begin{equation*}
\displaystyle{f(-y)-f(\widetilde{x}-y)=0, \ \ \ \ \ y\in (-n,n).}
\end{equation*}
Now, setting $y=-y_0$ in the above equality, we finally obtain:
$$
f(y_0) = f(\widetilde{x}+y_0) = f(x_0),
$$
which is a contradiction. This completes the proof.    
\end{proof}
In conclusion of this section, we prove a further nice properties of the sampling Kantorovich operators, that can be deduced from the representation formula achieved in Theorem \ref{th1}.
\begin{theorem}
Let $\chi$ be a kernel satisfying assumption (\ref{null_moments}) with $r \in \N^+$. Then:
$$
(S_w p_{r-1})(x)\ =\ \sum_{j=0}^{r-1}\frac{w^{-j}}{(j+1)!}\, p^{(j)}_{r-1}(x),
$$
for every $w>0$, and for any algebraic polynomials of degree at most $r-1$, i.e., $S_w$ maps algebraic polynomials of degree at most $r-1$ into algebraic polynomials of the same degree.
\end{theorem}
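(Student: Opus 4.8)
The plan is to feed the representation formula of Theorem \ref{th1} with $f=p_{r-1}$ and then to invoke the polynomial-reproduction property of the generalized sampling operators recalled in Theorem \ref{samp-gen}. One technical point must be addressed first: strictly speaking $p_{r-1}\notin C^{(r)}(\R)$, since a non-constant polynomial is unbounded, so Theorem \ref{th1} does not literally apply. However, its proof uses only the Taylor formula with Lagrange remainder on each interval $[k/w,(k+1)/w]$, which is \emph{exact} for polynomials; and since $p_{r-1}^{(r)}\equiv 0$, the remainder term is simply absent. Hence, repeating the computation (\ref{medie_espanse}) with $f=p_{r-1}$ — where the last integral now vanishes identically — gives, for every $w>0$ and $x\in\R$,
$$
(S_w p_{r-1})(x)\ =\ \sum_{j=0}^{r-1}\frac{w^{-j}}{(j+1)!}\,\sum_{k\in\Z}\chi(wx-k)\,p_{r-1}^{(j)}\!\left(\frac{k}{w}\right)\ =\ \sum_{j=0}^{r-1}\frac{w^{-j}}{(j+1)!}\,\bigl(G_w p_{r-1}^{(j)}\bigr)(x).
$$
All the series involved converge absolutely: $p_{r-1}^{(j)}(k/w)$ grows at most like $|k|^{r-1}$ in $k$, and under the standing hypotheses the absolute moment $M_{r-1}(\chi)$ is finite (condition (\ref{null_moments}) and the estimate in Theorem \ref{samp-gen} already require $M_r(\chi)<+\infty$, whence $M_{r-1}(\chi)<+\infty$).

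Next, each $p_{r-1}^{(j)}$, $j=0,1,\dots,r-1$, is an algebraic polynomial of degree at most $r-1-j\miu r-1$. Since $\chi$ satisfies the moment condition (\ref{null_moments}), the last part of Theorem \ref{samp-gen} yields the exact reproduction $\bigl(G_w p_{r-1}^{(j)}\bigr)(x)=p_{r-1}^{(j)}(x)$ for all $x\in\R$ and $w>0$. Substituting this into the identity above gives
$$
(S_w p_{r-1})(x)\ =\ \sum_{j=0}^{r-1}\frac{w^{-j}}{(j+1)!}\,p_{r-1}^{(j)}(x), \hskip1cm x\in\R,\ w>0,
$$
which is precisely the claimed formula.

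Finally, for the degree statement, write $d:=\deg p_{r-1}\miu r-1$. The term with $j=0$ equals $p_{r-1}(x)$, a polynomial of degree exactly $d$, whereas each term with $j\mau 1$ is $\frac{w^{-j}}{(j+1)!}\,p_{r-1}^{(j)}(x)$, a polynomial of degree $d-j<d$. Therefore the sum is a polynomial of degree exactly $d$, i.e., $S_w$ maps $p_{r-1}$ into a polynomial of the same degree. The only genuine obstacle in the whole argument is the one already handled: because polynomials lie outside the class $C^{(r)}(\R)$ for which Theorem \ref{th1} is stated, the representation formula must be re-derived for them rather than quoted — but once one observes the vanishing of the $r$-th-order remainder and the absolute convergence of the resulting finite sum of $G_w$-series, everything else follows immediately from Theorem \ref{samp-gen}.
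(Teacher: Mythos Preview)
Your proof is correct and follows essentially the same route as the paper: invoke the representation formula of Theorem~\ref{th1}, apply the polynomial reproduction part of Theorem~\ref{samp-gen} to each $G_w p_{r-1}^{(j)}$, and observe that $p_{r-1}^{(r)}\equiv 0$ makes the remainder vanish. You are in fact more careful than the paper's one-line proof, which simply cites Theorem~\ref{th1} without addressing the point you raise --- that $p_{r-1}\notin C^{(r)}(\R)$ because polynomials are unbounded --- and your re-derivation of the representation formula for polynomials (together with the absolute-convergence check via $M_{r-1}(\chi)<+\infty$) fills that small gap.
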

\begin{proof}
The proof follows immediately from the representation formula of Theorem \ref{th1}, the applications of Theorem \ref{samp-gen}, and finally observing that $p^{(r)}_{r-1}(x)=0$, for every $x\in \R$.
\end{proof}
%


\section{The construction of the kernels} \label{sec4}

In Section \ref{sec2}, the definition of kernel for the sampling Kantorovich operators $S_w$ (and also for $G_w$) has been provided. Several examples of well-known functions $\chi$ which satisfy assuptions $(\chi 1)$, $(\chi 2)$, and $(\chi 3)$ are given e.g., in \cite{BUNE,BABUSTVI2,COVI1,COVI7,COVI9}.  

  For instance, we can choose as kernels the following one-dimensional band-limited functions:
$$
\hskip-3.2cm F(x)\ :=\ \frac{1}{2}\, \left(\frac{\sin(\pi x /2)}{\pi x / 2}\right)^2, \hskip0.9cm \mbox{(Fej\'er's kernel),}
$$
$$
\hskip-0.2cm V(x)\ :=\ \frac{3}{2\pi}\, \frac{\sin(x /2)\, \sin(3 x /2)}{3 x^2 / 4}, \hskip0.3cm \hskip0.3cm \mbox{(de la Vall\'ee Poussin's kernel),}
$$
$$
\hskip-6.7cm \chi(x)\ :=\ \frac{\sin(\pi x /2)\, \sin(\pi x)}{\pi^2 x^2/2}, 
$$
$$
\hskip0.5cm b^{\alpha}(x):=2^{\alpha}\, \Gamma(\alpha+1)\, |x|^{-(n/2)+\alpha}{\cal B}_{(n/2)+\alpha}(|x|), \hskip0.3cm \mbox{(Bochner-Riesz kernels)},
$$
where $\alpha > (n-1)/2$, ${\cal B}_{\lambda}$ is the Bessel function of order $\lambda$ and $\Gamma$ is the Euler function, and finally,
$$
\hskip-3.2cm J_k(x)=c_k\, \mbox{sinc}^{2k}\left(\frac{x}{2k\pi\alpha}\right), \hskip1cm \mbox{(Jackson-type kernels)}
$$
with $k \in \N$, $\alpha \geq 1$, where the normalization coefficients $c_k$ are given by
$$
c_k\ :=\ \left[ \int_{\R} \mbox{sinc}^{2k}\left(\frac{u}{2 k \pi \alpha} \right) \, du \right]^{-1}.
$$ 
Actually, the above examples of kernels can be used to show the convergence of the operators $S_w$ and $G_w$, but they do not satisfy the moment condition (\ref{null_moments}), which we showed to be crucial in order to prove the inverse results of Section \ref{sec3}. 

Hence, here we briefly describe as it is possible to construct examples of kernels satisfying condition (\ref{null_moments}). The most convenient instances can be constructed by using the so-called central B-splines. 

  First of all, we recall that a function $q: I \to \R$ is called a (polynomial) spline of order $n \in \N^+$ (degree $n-1$) with knots $a_1<a_2<...<a_m$ belonging to $I$, if it coincides with a polynomial of degree $n-1$ on each of the intervals $(a_i,a_{i+1})$, $i=1,2,...,m-1$, see e.g., \cite{MA1,MO1,AMAL1}.

The central B-splines of order $n \in \N^+$, are defined by:
\be \label{splines}
 M_n(x)\ :=\ \frac{1}{(n-1)!} \sum^n_{i=0}(-1)^i \binom{n}{i} 
       \left(\frac{n}{2} + x - i \right)^{n-1}_+,     \hskip0.5cm   x \in \R,
\ee
where $(x)_+ := \max\left\{x,0 \right\}$ denotes ``the positive part'' of $x \in \R$, see e.g., \cite{BUNE,UN1}. They have knots at the points $0$, $\pm 1$, $\pm 2$, ..., $\pm n/2$ in case $n$ is even, and at $\pm 1/2$, $\pm 3/2$, ..., $\pm n/2$ in case $n$ is odd, and their support is the compact interval $[-n/2, n/2]$. The Fourier transform of the $M_n$ (see e.g., \cite{NE1}) is:
$$
\widehat{M_n}(v)\ =\ sinc(v/2)^n, \hskip1cm v \in \R.
$$ 
The central B-splines $M_n$ satisfy the assumptions $(\chi 1)$, $(\chi 2)$, and $(\chi 3)$, i.e., $M_n$ are kernels, see e.g., \cite{BABUSTVI2}. 
Now, we have the following classical theorem.
\begin{theorem}[\cite{BUST1}] \label{generation_kernels}
For $r \in \N^+$, $r \mau 2$, let $\ep_0 < \ep_1 < ... < \ep_{r-1}$ be any given real numbers, and let $a_{\mu_r}$, $\mu=0, 1, ..., r-1$, be the unique solutions of the linear system:
$$
\sum_{\mu=0}^{r-1}a_{\mu_r}(-i\, \ep_{\mu})^j\ =\ \left( \frac{1}{\widehat{M}_r}\right)^{(j)}(0),
$$
for every $j=0,1,...,r-1$, where $i$ denotes the imaginary unit. Then:
$$
\chi_r(x)\ :=\ \sum_{\mu=0}^{r-1}a_{\mu_r}\, M_r(t-\ep_{\mu}), \hskip1cm x \in \R,
$$
is a polynomial spline of order $r$, satisfying (\ref{null_moments}) and having support contained in $[\ep_0-r/2,\, \ep_{r-1}+r/2]$.
\end{theorem}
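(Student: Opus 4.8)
The plan is to move the whole statement to the Fourier transform side and let Lemma \ref{lemma1} do the real work, reducing matters to a Vandermonde argument together with two elementary facts about the central B-splines $M_r$.

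First I would settle the algebraic and structural points. Since $\widehat{M}_r(v)=\bigl(\sin(v/2)/(v/2)\bigr)^r$ is entire with $\widehat{M}_r(0)=1\neq0$, its reciprocal $1/\widehat{M}_r$ is real-analytic near the origin, so the right-hand sides $\bigl(1/\widehat{M}_r\bigr)^{(j)}(0)$, $j=0,\dots,r-1$, are well defined; moreover the coefficient matrix of the linear system is the transpose of the Vandermonde matrix with the pairwise distinct nodes $-i\ep_0,\dots,-i\ep_{r-1}$, so the system has a unique solution $(a_{0_r},\dots,a_{(r-1)_r})$ (in fact a real one, since after dividing the $j$-th equation by $(-i)^j$ one gets a real Vandermonde system, $1/\widehat{M}_r$ being even; hence $\chi_r$ is real-valued). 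Each translate $M_r(\cdot-\ep_\mu)$ is a piecewise polynomial of degree $r-1$ with finitely many knots and support $[\ep_\mu-r/2,\ep_\mu+r/2]$, so the finite combination $\chi_r$ is again a polynomial spline of order $r$ (its knots being the union of the translated knot sets), with support contained in $[\ep_0-r/2,\ep_{r-1}+r/2]$ since $\ep_0\le\ep_\mu\le\ep_{r-1}$. As $r\ge2$ we have $M_r\in C^{r-2}(\R)\subseteq C(\R)$, so $\chi_r$ is continuous; and $\sum_{k\in\Z}\chi_r(u-k)=\sum_\mu a_{\mu_r}\sum_{k\in\Z}M_r((u-\ep_\mu)-k)=\sum_\mu a_{\mu_r}=\bigl(1/\widehat{M}_r\bigr)(0)=1$, using the $j=0$ equation and the partition-of-unity property of $M_r$ recorded in Section \ref{sec4}, so $\chi_r$ satisfies $(\chi 2)$; compact support gives $(\chi 1)$ and $(\chi 3)$. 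Thus $\chi_r$ is a continuous kernel and Lemma \ref{lemma1} applies to it.

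The core of the proof is then the behaviour of $\widehat{\chi_r}$. Put $P(v):=\sum_{\mu=0}^{r-1}a_{\mu_r}e^{-i\ep_\mu v}$, an entire function, so that $\widehat{\chi_r}(v)=\widehat{M}_r(v)\,P(v)$. Since $P^{(j)}(0)=\sum_\mu a_{\mu_r}(-i\ep_\mu)^j$, the linear system says precisely that $P$ and $1/\widehat{M}_r$ have the same Taylor coefficients at $0$ up to order $r-1$; hence $\widehat{\chi_r}(v)=\widehat{M}_r(v)P(v)=1+\mathcal{O}(v^r)$ near the origin, which yields $\widehat{\chi_r}(0)=1$ and $\widehat{\chi_r}^{(j)}(0)=0$ for $j=1,\dots,r-1$. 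For $k\in\Z\setminus\{0\}$ the factor $\sin(v/2)$ in $\widehat{M}_r(v)=\bigl(2\sin(v/2)/v\bigr)^r$ has a simple zero at $v=2\pi k$, so $\widehat{M}_r$ vanishes to order exactly $r$ there, i.e. $\widehat{M}_r^{(l)}(2\pi k)=0$ for $l=0,\dots,r-1$; by the Leibniz rule, $\widehat{\chi_r}^{(j)}(2\pi k)=\sum_{l=0}^{j}\binom{j}{l}\widehat{M}_r^{(l)}(2\pi k)\,P^{(j-l)}(2\pi k)=0$ for every $j=0,\dots,r-1$. These are exactly the conditions appearing in Lemma \ref{lemma1}, so $\chi_r$ satisfies the moment condition (\ref{null_moments}), and the theorem follows.

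I do not expect any step to be genuinely hard; the one point needing care is the bookkeeping that identifies the linear system with the matching of the Taylor coefficients of $P$ and $1/\widehat{M}_r$, and then the simultaneous check, over all nonzero lattice points $2\pi k$ at once, that the order-$r$ zero of $\widehat{M}_r$ there absorbs the (otherwise unconstrained) derivatives of $P$. Once the factorisation $\widehat{\chi_r}=\widehat{M}_r\,P$ is in hand, the passage from these Fourier-transform conditions to the discrete moments is delegated entirely to Lemma \ref{lemma1}.
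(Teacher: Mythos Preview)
The paper does not contain its own proof of this theorem: it is quoted verbatim from \cite{BUST1} and merely used to generate the example $\chi_2$ in (\ref{chi2}). So there is nothing in the present paper to compare your argument against.

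That said, your proof is correct and is exactly the standard route (and, as far as one can tell from the surrounding discussion, the one intended in \cite{BUST1}): factor $\widehat{\chi_r}=\widehat{M}_r\,P$, interpret the linear system as matching the Taylor jets of $P$ and $1/\widehat{M}_r$ at $0$ to order $r-1$, read off the behaviour of $\widehat{\chi_r}$ at $0$ from this matching, read off its behaviour at $2\pi k$, $k\neq 0$, from the order-$r$ zero of $\widehat{M}_r$, and invoke Lemma~\ref{lemma1}. The Vandermonde argument for existence/uniqueness, the evenness argument for reality of the $a_{\mu_r}$, the spline and support claims, and the verification of $(\chi 1)$--$(\chi 3)$ are all fine. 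Your separate check of $(\chi 2)$ via the partition-of-unity property of $M_r$ is not strictly needed (it is contained in the $j=0$ Fourier condition via Lemma~\ref{lemma1}), but it does no harm.
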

For instance, an example of kernel generated as in Theorem \ref{generation_kernels} which satisfy (\ref{null_moments}) with $r=2$ (see Fig. \ref{fig1}) is the following:
\be \label{chi2}
\chi_2(x)\ =\ 3\, M_2(x-2)\, -\, 2\, M_2(x-3), \hskip1cm x \in \R.
\ee
By procedures similar to that described by Theorem \ref{generation_kernels}, many other instances of kernels can be easily generated. For more details, and for other examples of kernels, see e.g., \cite{BUNE,BUST1,BAKAVI1,COSP3,BECOGA1}.
\begin{figure}
\centering
\includegraphics[scale=0.4]{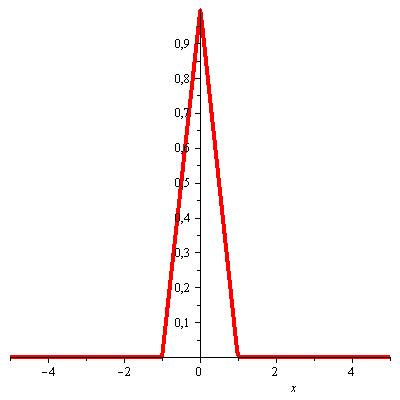}
\hskip0.4cm
\includegraphics[scale=0.4]{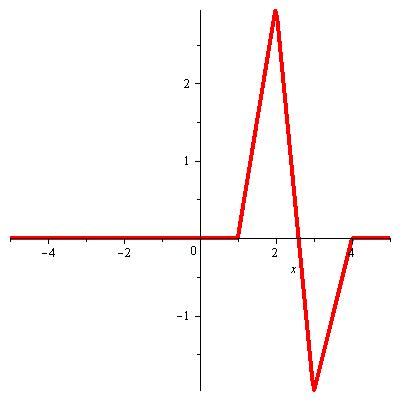}
\caption{{\small On the left: plot of the central B-spline of order 2 (roof-function). On the right: plot of the kernel $\chi_2$ defined in (\ref{chi2}).}} \label{fig1}
\end{figure}
Moreover, for the reconstruction of signals in terms of splines using finite number of samples from the past, see \cite{BUST1,COMIVI1}.
%


\section{Conclusions}

By using the representation formula proved in Theorem \ref{th1} we are able to obtain an inverse result for the sampling Kantorovich operators. In particular, we show that the best order of approximation that can be achieved for the aliasing error $\| S_w f - f\|_{\infty}$ is ${\cal O}(w^{-1})$, as $w \to +\infty$, for $f \in C^{(2)}(\R)$ (Theorem \ref{thC2}). A similar result has been achieved on the space $C(\R)$, as in Theorem \ref{th2}.

Even if the above representation formula link the sampling Kantorovich operators with the generalized sampling ones of $f$ and its derivatives, the proof of the above inverse result cannot be directly reconnected to the corresponding one for the generalized operators. Indeed, for the operators $S_w f$ is not possible to establish an higher order of approximation theorem which revealed to be crucial for the proof of the aforementioned inverse result of \cite{BUST1} relative to $G_w$.


\vskip0.2cm

\section*{Acknowledgments}

The authors are members of the Gruppo  
Nazionale per l'Analisi Matematica, la Probabilit\'a e le loro  
Applicazioni (GNAMPA) of the Istituto Nazionale di Alta Matematica (INdAM). 

\noindent The authors are partially supported by the "Department of Mathematics and Computer Science" of the University of Perugia (Italy). Moreover, the first author of the paper has been partially supported within the 2017 GNAMPA-INdAM Project ``Approssimazione con operatori discreti e problemi di minimo per funzionali del calcolo delle variazioni con applicazioni all'imaging''.

\vskip0.1cm
%
%

\end{document}